\def\dt{\partial_t}
\def\T{T^\sharp}
\newcommand\mD{{\cal D}}
\def\<{\langle}
\def\>{\rangle}
\def\RR{\mathbb{R}}
\def\eps{\varepsilon}
\newcommand\tr{\operatorname{Tr}}
\newcommand\Div{\operatorname{div}}
\newcommand\id{\operatorname{id}}
\def\Ric{\mathcal Ric}
\def\vol{\operatorname{vol}}
\def\eq{\hspace*{-1.5mm}&=&\hspace*{-1.2mm}}
\def\plus{\hspace*{-1.5mm}&+&\hspace*{-1.2mm}}
\newtheorem{definition}{Definition}
\newtheorem{example}{Example}
\newtheorem{remark}{Remark}
\newtheorem{lemma}{Lemma}
\newtheorem{proposition}{Proposition}
\newtheorem{theorem}{Theorem}
\author{Vladimir Rovenski\footnote{Mathematical Department, University of Haifa, Mount Carmel, 31905 Haifa,  Israel
       \newline e-mail: {\tt vrovenski@univ.haifa.ac.il} } }
\title{The Einstein-Hilbert type action on almost $k$-product manifolds}
\begin{document}

\date{}

\maketitle

\begin{abstract}
A Riemannian manifold endowed with $k>2$ orthogonal complementary distributions
(called here a Riemannian almost $k$-product structure)
appears in such topics as multiply warped products, the webs composed of several foliations,
and proper Dupin hypersurfaces of real space-forms.
In~the paper we consider the mixed scalar curvature of such structure for $k>2$,
derive Euler-Lagrange equations for the Einstein-Hilbert type action with respect to adapted variations of metric,
and present them in a nice form of Einstein equation.

\vskip1.5mm\noindent
\textbf{Keywords}:
Almost $k$-product manifold,
mixed scalar curvature,
Einstein-Hilbert type action

\vskip1.5mm
\noindent
\textbf{Mathematics Subject Classifications (2010)} 53C15; 53C12; 53C40

\end{abstract}

\section*{Introduction}

Many examples of
Riemannian metrics come (as critical points) from variational problems, a~particularly famous of which is the \textit{Einstein-Hilbert action}, given on a smooth closed manifold $M$~by
\begin{equation}\label{Eq-EH}
 J: g \to \int_M \big\{\frac1{2\mathfrak{a}}\,({\rm S}-2{\Lambda})+{\cal L}\big\}\,{\rm d}{\rm vol}_g ,
\end{equation}
e.g., \cite{besse}.
Here, $g$ is a pseudo-Riemannian metric on $M$,
S\, -- the scalar curvature of $(M,g)$,
$\Lambda$ -- a constant (the ``cosmological constant"), ${\cal L}$ -- Lagrangian describing the matter contents,
and $\mathfrak{a}=8\pi G/c^{4}$ -- the coupling constant involving the gravitational constant $G$ and the speed of light $c$.
To deal also with non-compact manifolds (``spacetimes"), it is assumed that the integral above is taken over $M$ if it converges;
otherwise, one integrates over arbitrarily large, relatively compact domain $\Omega\subset M$, which also contains supports
of variations of $g$. The Euler-Lagrange equation for \eqref{Eq-EH} (called the \textit{Einstein equation}) is
\begin{equation}\label{Eq-EC-R}
 {\rm Ric} -\,(1/2)\,{\rm S}\cdot g +\,\Lambda\,g  = \mathfrak{a}\,\Theta
\end{equation}
with the Ricci curvature ${\rm Ric}$ and the
energy-momentum tensor $\Theta$ (generalizing the stress tensor of Newtonian physics), given in a coordinates by
$\Theta_{\mu\nu}=-2\,{\partial{\cal L}}/{\partial g^{\mu\nu}} +g_{\mu\nu}{\cal L}$.
The solution of \eqref{Eq-EC-R} is a metric $g$, satisfying this equation, where the tensor $\Theta$ (describing a specified type of matter) is given.
The~classification of solutions of \eqref{Eq-EC-R} is a deep and largely unsolved problem~\cite{besse}.

 Distributions on a manifold (i.e., subbundles of the tangent bundle) appear in various situations, e.g., \cite{bf,g1967}
and are used to build up notions of integrability, and specifically of a foliated manifold.
On~a manifold equipped with an additional structure (e.g., almost product \cite{g1967} or contact),
 one can consider an analogue of \eqref{Eq-EH} adjusted to that structure.
This approach was taken in \cite{bdrs,bdr,r2018,rz-1,rz-2}, for $M$ endowed with
a distribution $\mD$ or a foliation (that can be viewed as an integrable~distribution).

In this paper, a similar change in \eqref{Eq-EH} is considered on
a connected smooth $n$-dimensional manifold endowed with $k\ge2$ pairwise orthogonal $n_i$-dimensional distributions with $\sum n_i = n$.
The notion of a {multiply warped product}, e.g., \cite{chen1}, is a special case of
this structure, which can be also viewed in the theory of webs composed of foliations of different dimensions, see~\cite{AG2000}.
The~\textit{mixed Einstein-Hilbert action} on $(M,\mD_1,\ldots,\mD_k)$,
\begin{equation}\label{Eq-Smix}
 J_{\mD}: g\mapsto\int_{M} \Big\{\frac1{2\mathfrak{a}}\,({\rm S}_{\,\mD_1,\ldots,\mD_k}-2\,{\Lambda})+{\cal L}\Big\}\,{\rm d}\vol_g,
\end{equation}
is an analog of \eqref{Eq-EH}, where ${\rm S}\,$ is replaced by the mixed scalar curvature ${\rm S}_{\,\mD_1,\ldots,\mD_k}$, see~\eqref{E-Smix-k}.
The~geometrical part of \eqref{Eq-Smix} is $J^g_{\mD}: g\mapsto\int_{M} {\rm S}_{\,\mD_1,\ldots,\mD_k}\,{\rm d}\vol_g$ -- the total mixed scalar curvature.
 The~mixed scalar curvature is the simplest curvature invariant of a pseudo-Riemannian an almost product structure, which can be defined as an averaged sum of sectional curvatures of planes that non-trivially intersect with both of the distributions.
Investigation of ${\rm S}_{\,\mD_1,\ldots,\mD_k}$ can lead to multiple results regarding the existence of foliations and submersions with interesting geometry, e.g., integral formulas and splitting results, curvature prescribing and variational problems, see \cite{rov-5} for $k=2$.
Varying \eqref{Eq-Smix} as a functional of adapted metric $g$, we obtain the Euler-Lagrange equations in the form of Einstein equation \eqref{Eq-EC-R}, i.e.,
\begin{equation}\label{E-gravity}
 \Ric_{\,{\mD}} -\,(1/2)\,{\cal S}_{\,\mD}\cdot g +\,\Lambda\,g = \mathfrak{a}\,\Theta
\end{equation}
(for $k=2$, see \cite{bdrs,rz-1,rz-2}),
where the Ricci tensor and the scalar curvature are replaced by
the Ricci type tensor $\Ric_{\,{\mD}}$, see \eqref{E-main-0ij}, and its~trace ${\cal S}_{\,\mD}$.
 By the equality
 \[
  {\rm S}={\rm S}_{\,\mD_1,\ldots,\mD_k}+\sum\nolimits_{\,i} {\rm S}({\mD_i}),
 \]
where ${\rm S}({\mD_i})$ is the scalar curvature along the distribution $\mD_i$,
one can combine the action \eqref{Eq-EH} with \eqref{Eq-Smix} to obtain the \textit{perturbed Einstein-Hilbert action} on $(M,\mD_1,\ldots,\mD_k)$:
\[
 \bar J_{\eps}: g\mapsto\int_{M} \big\{\frac1{2\mathfrak{a}}\,({\rm S}+\eps\,{\rm S}_{\,\mD_1,\ldots,\mD_k} -2\,{\Lambda})
+{\cal L}\big\}\,{\rm d}\vol_g
\]
with $\eps\in\RR$, whose critical points describe the ``space-times" in extended theory of gravity.

In~the paper we consider the geometrical part of the Einstein-Hilbert type action \eqref{Eq-Smix} (called the total mixed scalar curvature),
derive Euler-Lagrange equations with respect to adapted variations of metric (which generalize equations for $k=2$ in \cite{rz-1}),
and present them in a nice form \eqref{E-gravity} of Einstein equation.
 We delegate the following for further study:
a)~generalize our results for more general variations of metrics;
b)~extend our results for variations of connections (as in Einstein-Cartan theory);
c)~find more applications of our results in geometry and physics.

\section{Preliminaries}

Let $M$ be a smooth connected $n$-dimensional manifold with the Levi-Civita connection $\nabla$ and the curvature tensor $R$.
A~{pseudo-Riemannian metric} $g=\<\cdot,\cdot\>$ of index $q$ on $M$ is an element $g\in{\rm Sym}^2(M)$
(of the space of symmetric $(0,2)$-tensors)
such that each $g_x\ (x\in M)$ is a {non-degenerate bilinear form of index} $q$ on the tangent space $T_xM$.
For~$q=0$ (i.e., $g_x$ is positive definite) $g$ is a Riemannian metric and for $q=1$ it is called a Lorentz metric.
Let~${\rm Riem}(M)\subset{\rm Sym}^2(M)$ be the subspace of pseudo-Riemannian metrics of a given signature.

A distribution $\mD$ on $M$ is \textit{non-degenerate},
if $g_x$ is non-degenerate on $\mD_x\subset T_x M$ for all $x\in M$; in this case, the orthogonal complement
of~${\mD}^\bot$ is also non-degenerate.

Let $M$ be endowed with $k\ge2$ pairwise transversal $n_i$-dimensional distributions ${\mD}_i\ (1\le i\le k)$ with $\sum n_i = n$.
Denote by ${\rm Riem}(M,\mD_1,\ldots\mD_k)\subset {\rm Riem}(M)$ the subspace of pseudo-Riemannian metrics making $\{\mD_i\}$
pairwise orthogonal and non-degenerate.
A $(M;{\mD}_1,\ldots,{\mD}_k)$ with a compatible metric $g$ will be called here a \textit{Riemannian almost $k$-product manifold}, see
\cite{r-IF-k}.
Let $P_i:TM\to{\mD}_i$ be the orthoprojector, and $P_{i}^\bot=\id_{\,TM}-P_i$ be the orthoprojector onto ${\mD}_{i}^\bot$.
The~second fundamental form $h_i:{\mD}_i\times {\mD}_i\to {\mD}_i^\bot$ (symmetric)
and the integrability tensor $T_i:{\mD}_i\times {\mD}_i\to {\mD}_i^\bot$ (skew-symmetric)
of ${\mD}_i$ are defined by
\[
 h_i(X,Y) = \frac12\,P_{i}^\bot(\nabla_XY+\nabla_YX),\quad
 T_i(X,Y) = \frac12\,P_{i}^\bot(\nabla_XY-\nabla_YX)=\frac12\,P_{i}^\bot\,[X,Y].
\]
Similarly, $h_{i}^\bot,\,H_{i}^\bot=\tr_g h_{i}^\bot,\,T_{i}^\bot$ are
the~second fundamental forms, mean curvature vector fields and the integrability tensors
of distributions ${\mD}_{i}^\bot$ in $M$.
 Note that
 $H_i=\sum\nolimits_{\,j\ne i} P_j H_i$,
etc.
Recall that a distribution ${\mD}_i$ is called integrable if $T_i=0$,
and ${\mD}_i$ is called {totally umbilical}, {harmonic}, or {totally geodesic},
if ${h}_i=({H}_i/n_i)\,g,\ {H}_i =0$, or ${h}_i=0$, respectively.

The ``musical" isomorphisms $\sharp$ and $\flat$ will be used for rank one and symmetric rank 2 tensors.
For~example, if $\omega \in\Lambda^1(M)$ is a 1-form and $X,Y\in {\mathfrak X}_M$ then
$\omega(Y)=\<\omega^\sharp,Y\>$ and $X^\flat(Y) =\<X,Y\>$.
For arbitrary (0,2)-tensors $B$ and $C$ we also have $\<B, C\> =\tr_g(B^\sharp C^\sharp)=\<B^\sharp, C^\sharp\>$.

The shape operator $(A_i)_Z$ of ${\mD}_i$ with $Z\in{\mD}_i^\bot$, and the operator $(T_i)^\sharp_{Z}$ are defined~by
\[
 \<(A_i)_Z(X),Y\>= \,h_i(X,Y),Z\>,\quad \<(T_i)_Z^\sharp(X),Y\>=\<T_i(X,Y),Z\>, \quad X,Y \in {\mD}_i .
\]
Given $g\in{\rm Riem}(M,\mD_1,\ldots\mD_k)$, there exists a~local $g$-orthonormal frame $\{E_1,\ldots,E_n\}$ on $M$, where
 $\{E_1,\ldots, E_{n_1}\}\subset{{\mD}_1}$ and
 $\{E_{n_{i-1}+1},\ldots, E_{n_i}\}\subset{{\mD}_i}$ for $2\le i\le k$, and $\eps_a=\<E_{a},E_{a}\>\in\{-1,1\}$.
All quantities defined below using an adapted frame do not depend on the choice of this~frame.
 The squares of norms of tensors are obtained using
\begin{equation*}
 \<h_i,h_i\>=\hskip-2.7mm\sum\limits_{n_{i-1}<a,b\,\le n_i}\!\!\!\!\!\eps_a\eps_b\,\<h_i({E}_a,{E}_b),h_i({E}_a,{E}_b)\>, \quad
 \<T_i,T_i\>=\hskip-2.7mm\sum\limits_{n_{i-1}<a,b\,\le n_i}\!\!\!\!\!\eps_a\eps_b\,\<T_i({E}_a,{E}_b),T_i({E}_a,{E}_b)\>.
\end{equation*}
The \textit{divergence} of a vector field $X\in\mathfrak{X}_M$ is given by
 $(\Div X)\,{\rm d}\vol_g = {\cal L}_{X}({\rm d}\vol_g)$,
where ${\rm d} \vol_g$ is the volume form of $g$. One may show that
$\Div X=\tr(\nabla X)=\Div_i X+\Div_i^\bot X$, where
\[
 \Div_i X =\sum\nolimits_{n_{i-1}<a\,\le n_i}\eps_a\,\<\nabla_{a}\,X, {E}_a\>,\quad
 \Div_i^\bot X=\sum\nolimits_{b\ne(n_{i-1}, n_i]}\eps_b\,\<\nabla_{b}\,X, E_b\> .
\]
Observe that for $X\in\mD_i$ we have
\begin{equation}\label{E-divN}
 {\Div}_i^\bot X = \Div X +\<X,\,H_i^\bot\>.
\end{equation}
For a $(1,2)$-tensor $Q$ define a $(0,2)$-tensor ${\Div}_i^\bot Q$ by
\[
 ({\Div}_i^\bot\,Q)(X,Y) = \sum\nolimits_{b\ne(n_{i-1}, n_i]}\eps_b\,\<(\nabla_{b}\,Q)(X,Y), E_b\>,\quad X,Y \in \mathfrak{X}_M.
\]
For a~${\mD}_i$-valued $(1,2)$-tensor $Q$, similarly to \eqref{E-divN}, we have
${\Div}_i^\bot\,Q = \Div Q+\<Q,\,H_i\>$, where
\begin{equation*}
 ({\Div}_i\,Q)(X,Y) =\sum\nolimits_{n_{i-1}<a\,\le n_i}\eps_a\,\<(\nabla_{a}\,Q)(X,Y), E_a\> = -\<Q(X,Y), H_i\>,
\end{equation*}
and $\<Q,\,H_i\>(X,Y)=\<Q(X,Y),\,H_i\>$ is a $(0,2)$-tensor.
For example, $\Div_i^\bot h_i = \Div h_i+\<h_i,\,H_i\>$.
For~a~function $f$ on $M$, we use the notation $P_i^\bot(\nabla f)$ for the projection of $\nabla f$ onto $\mD_i^\bot$.

The ${\mD}_i$-\textit{deformation tensor} ${\rm Def}_{\mD_i}Z$ of $Z\in\mathfrak{X}_M$
is the symmetric part of $\nabla Z$ restricted to~${\mD}_i$,
\begin{equation*}
 2\,{\rm Def}_{\mD_i}\,Z(X,Y)=\<\nabla_X Z, Y\> +\<\nabla_Y Z, X\>,\quad X,Y\in \mD_i.
\end{equation*}
The Casorati type operators ${\cal A}_i:\mD_i\to\mD_i$ and ${\cal T}_i:\mD_i\to\mD_i$ and the symmetric $(0,2)$-tensor $\Psi_i$, see \cite{bdr,rz-1}, 
are defined using $A_i$ and $T_i$ by
\begin{eqnarray*}
 && {\cal A}_i=\sum\nolimits_{\,E_a\in\mD_i^\bot}\eps_a ((A_i)_{E_a})^2,\quad
 {\cal T}_i=\sum\nolimits_{\,E_a\in\mD_i^\bot}\eps_a((T_i)_{E_a}^\sharp)^2,\\
 && \Psi_i(X,Y) = \tr((A_i)_Y (A_i)_X+(T_i)^\sharp_Y (T_i)^\sharp_X), \quad X,Y\in\mD_i^\bot.
\end{eqnarray*}
We define a self-adjoint $(1,1)$-tensor ${\cal K}_i$ by the formula with Lie bracket,
\[
 {\cal K}_i = \sum\nolimits_{\,E_a\in\mD_i^\bot} \eps_{\,a}\,[(\T_i)_{E_a}, (A_i)_{E_a}]
 = \sum\nolimits_{\,E_a\in\mD_i^\bot} \eps_a \big((T_i)_{E_a} (A_i)_{E_a} - (A_i)_{E_a} (T_i)_{E_a}\big).
\]
For any $(1,2)$-tensors $Q_1,Q_2$ and a $(0,2)$-tensor $S$ 
on $TM$, define the following $(0,2)$-tensor $\Upsilon_{Q_1,Q_2}$:
\[
 \<\Upsilon_{Q_1,Q_2}, S\> =  \sum\nolimits_{\,\lambda, \mu} \eps_\lambda\, \eps_\mu\,
 \big[S(Q_1(e_{\lambda}, e_{\mu}), Q_2( e_{\lambda}, e_{\mu})) + S(Q_2(e_{\lambda}, e_{\mu}), Q_1( e_{\lambda}, e_{\mu}))\big],
\]
where on the left-hand side we have the inner product of $(0,2)$-tensors induced by $g$,
$\{e_{\lambda}\}$ is a local orthonormal basis of $TM$ and $\eps_\lambda = \<e_{\lambda}, e_{\lambda}\>\in\{-1,1\}$.

\begin{remark}\rm
 If $g$ is definite then $\Upsilon_{\,h_i,h_i}=0$ if and only if $h_i=0$.
Indeed, for any $X\in{\cal D}_i$ we~have
\begin{equation*}
 \<\Upsilon_{\,h_i,h_i}, X^\flat\otimes X^\flat\> = 2\sum\nolimits_{\,a,b} \<X, h_i(E_{a}, E_{b})\>^{2}.
\end{equation*}
The above sum is equal to zero if and only if every summand vanishes. This yields $h_i=0$.
Thus, $\Upsilon_{\,h_i,h_i}$ is a ``measure of non-total geodesy" of ${\cal D}_i^\bot$.
Similarly, if $\Upsilon_{\,T_i,T_i}=0$ then
\[
 \<\Upsilon_{\,T_i,T_i}, X^\flat\otimes X^\flat\> = 2\sum\nolimits_{\,a,b} \<X, T_i(E_{a},E_{b})\>^{2}.
\]
Hence, if $g$ is definite then the condition $\Upsilon_{\,T_i,T_i}=0$ is equivalent to $T_i=0$.
Therefore, $\Upsilon_{\,T_i,T_i}$ can be viewed as a ``measure of non-integrability" of ${\cal D}_i$.
\end{remark}

\section{The mixed scalar curvature}

A plane in $TM$ spanned by two vectors belonging to different distributions ${\mD}_i$ and ${\mD}_j$ will be called~\textit{mixed},
and the its sectional curvature is called mixed.
Similarly to the case of $k=2$, the mixed scalar curvature of $(M,g;\mD_1,\ldots,\mD_k)$ is defined as an averaged mixed sectional curvature.

\begin{definition}[see \cite{r-IF-k}]\rm
Given $g\in{\rm Riem}(M;\mD_1,\ldots,\mD_k)$ with $k\ge2$,
the following function on $M$ will be called the \textit{mixed scalar curvature}:
\begin{equation}\label{E-Smix-k}
 {\rm S}_{\,{\mD}_1,\ldots,{\mD}_k}=\sum\nolimits_{\,i<j}{\rm S}({\mD}_i,{\mD}_j),
\end{equation}
where
\[
 {\rm S}({\mD}_i,{\mD}_j) = \sum\nolimits_{\,n_{i-1}<a\,\le n_i,\ n_{j-1}<b\le n_j}
 \eps_a\,\eps_b\<R(E_a,{E}_b)\,E_a,\,{E}_{b}\>,\quad i\ne j.
\]
The~following symmetric $(0,2)$-tensor $r$ will be called the \textit{partial Ricci tensor}:
\[
  r(X,Y) = \frac12\sum\nolimits_{\,i=1}^k {r}_{i}(X,Y),
\]
where the {partial Ricci tensor} related to $\mD_i$ is
\begin{equation}\label{E-Rictop2}
 {r}_{i}(X,Y) = \sum\nolimits_{n_{i-1}<a\,\le n_i} \eps_a\, \<R_{\,E_a,\,P_i^\bot\,X}\,E_a, \, P_i^\bot\,Y\>, \quad X,Y\in \mathfrak{X}_M.
\end{equation}
\end{definition}

\begin{proposition} We have
\begin{equation*}
  {\rm S}_{\,{\mD}_1,\ldots,{\mD}_k} = \frac12\sum\nolimits_{\,i}{\rm S}_{\,{\mD}_i,{\mD}^\bot_i}
  =\tr_g\, r.
\end{equation*}
\end{proposition}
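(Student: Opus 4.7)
The plan is to verify the two equalities directly from the definitions by careful bookkeeping in an adapted orthonormal frame, exploiting the standard symmetries of the curvature tensor and the fact that $\mD_i^\bot=\bigoplus_{j\ne i}\mD_j$ as an orthogonal direct sum.

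For the first equality, I would begin by noting that the symmetry $\<R(E_a,E_b)E_a,E_b\>=\<R(E_b,E_a)E_b,E_a\>$ of the sectional curvature implies $\mathrm{S}(\mD_i,\mD_j)=\mathrm{S}(\mD_j,\mD_i)$. Hence
\[
 \mathrm{S}_{\,\mD_1,\ldots,\mD_k}=\sum_{i<j}\mathrm{S}(\mD_i,\mD_j)=\frac12\sum_{i\ne j}\mathrm{S}(\mD_i,\mD_j)=\frac12\sum_{i}\sum_{j\ne i}\mathrm{S}(\mD_i,\mD_j).
\]
Since $\mD_i^\bot$ is the orthogonal direct sum of $\mD_j$ for $j\ne i$, an adapted $g$-orthonormal frame of $\mD_i^\bot$ is obtained by concatenating frames of the $\mD_j$, and the definition of $\mathrm{S}(\mD_i,\mD_j)$ is additive in this decomposition. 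Therefore $\sum_{j\ne i}\mathrm{S}(\mD_i,\mD_j)=\mathrm{S}(\mD_i,\mD_i^\bot)$, which yields the first equality.

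For the second equality, the idea is to compute $\tr_g r$ in the fixed adapted frame $\{E_1,\ldots,E_n\}$. Using $r=\tfrac12\sum_i r_i$,
\[
 \tr_g r=\frac12\sum_i\sum_\lambda \eps_\lambda\, r_i(E_\lambda,E_\lambda)=\frac12\sum_i\sum_\lambda \eps_\lambda\sum_{E_a\in\mD_i}\eps_a\,\<R(E_a,P_i^\bot E_\lambda)E_a,\,P_i^\bot E_\lambda\>.
\]
The key point is that $P_i^\bot E_\lambda=0$ when $E_\lambda\in\mD_i$ and $P_i^\bot E_\lambda=E_\lambda$ when $E_\lambda\in\mD_i^\bot$; hence the inner sum collapses to a sum over $E_b\in\mD_i^\bot$, giving exactly $\mathrm{S}(\mD_i,\mD_i^\bot)$. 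Summing over $i$ gives $\tr_g r=\tfrac12\sum_i \mathrm{S}(\mD_i,\mD_i^\bot)$, matching the middle expression.

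The calculation is essentially routine; the only subtle point is keeping track of which indices run over which distribution and being careful that the trace over $TM$ with $P_i^\bot$ inserted becomes a trace over $\mD_i^\bot$ alone. Neither step uses anything beyond the pairwise orthogonality of the $\mD_i$, the pairwise symmetry of the sectional curvature, and the fact that the definitions above are frame-independent.
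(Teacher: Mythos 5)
Your proof is correct and follows essentially the same route as the paper, which simply observes that the claim follows from the definitions together with the identity $\tr_g r_i = {\rm S}_{\,\mD_i,\mD_i^\bot}$; your write-up just supplies the routine frame computations (pair symmetry of $R$ and the collapse of $P_i^\bot$ onto $\mD_i^\bot$) that the paper leaves implicit.
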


\begin{proof} This directly follows from definitions \eqref{E-Smix-k} and \eqref{E-Rictop2} and equality
$\tr_g\, r_i = {\rm S}_{\,{\mD}_i,{\mD}^\bot_i}$.
\end{proof}

The~following formula for a Riemannian manifold $(M,g)$ endowed with two complementary ortho\-gonal distributions ${\mD}$ and ${\mD}^\bot$,
see \cite{wa1}:
\begin{equation}\label{E-PW}
 \Div(H+H^\bot) ={\rm S}_{\,{\mD},{\mD}^\bot}+\<h,h\>+\<h^\bot,h^\bot\>
 -\<H,H\>-\<H^\bot,H^\bot\>-\<T,T\>-\<T^\bot,T^\bot\> ,
\end{equation}
has many interesting global corollaries (e.g., decomposition criteria using the sign of ${\rm S}$, \cite{step1}).
In \cite{r-IF-k}, we generalized \eqref{E-PW} to $(M,g)$ with $k>2$ distributions
and gave applications to splitting and isometric immersions of manifolds., in particular, multiply warped products.

The mixed scalar curvature of a pair of distributions $(\mD_i,\mD_i^\bot)$ on $(M,g)$ is
\begin{equation}\label{eq-wal2}
 {\rm S}_{\,\mD_i,\mD_i^\bot} = \sum\nolimits_{\,n_{i-1}<a\,\le n_i,\ b\ne(n_{i-1}, n_i]}\eps_a \eps_b\,\<R_{\,E_a, E_b} E_a, E_b\>.
\end{equation}
 If~$\mD_i$ is spanned by a unit vector field $N$, i.e., $\<N,N\>=\eps_N\in\{-1,1\}$,
then ${\rm S}_{\,\mD_i,\mD_i^\bot}=\eps_N{\rm Ric}_{N,N}$, where ${\rm Ric}_{N,N}$ is the Ricci curvature in the $N$-direction.
We have ${\rm S}_{\,{\mD}_i,{\mD}_i^\bot}=\tr_g r_i=\tr_g r_{i}^\bot$.
If $\dim\mD_i=1$ then $r_i=\eps_N\,R_N$, where $R_N=R(N,\,\cdot)\,N$ is the Jacobi operator,
and
$r_i^\bot={\rm Ric}_{N,N}\,g_i^\bot$, where the symmetric (0,2)-tensor $g_i^\perp$ is defined by
$g_i^\perp(X,Y)=\<P_i^\bot X, P_i^\bot Y\>$ for $X,Y \in \mathfrak{X}_M$.
 The~following presentation of the partial Ricci tensor in \eqref{E-Rictop2} is valid, see \cite{bdr,rz-1}:
\begin{equation}\label{E-genRicN}
 r_{i} = \Div h_i +\<h_i,\,H_i\>-{\cal A}_i^\flat-{\cal T}_i^\flat-\Psi_i^\bot+{\rm Def}_{\mD_i^\bot}\,H_i^\bot.
\end{equation}
Tracing \eqref{E-genRicN} over ${\cal D}_i$ and applying the equalities
\begin{eqnarray*}
 && \tr_{g}\,({\Div}\,h_i) =\Div H_i,\quad
 \tr\<h_i,\,H_i\> = \<H_i,H_i\>,\quad
 \tr_{g}\Psi_i^\bot =\<h_i^\bot,h_i^\bot\> - \<T_i^\bot,T_i^\bot\>,\\
 && \tr{\cal A}_i = \<h_i,h_i\>,\quad
 \tr{\cal T}_i = -\<T_i,T_i\>,\quad
 \tr_{g}\,({\rm Def}_{{\cal D}_i^\bot}\,H_i^\bot) = \Div H_i^\bot +g(H_i^\bot, H_i^\bot),
\end{eqnarray*}
we get \eqref{E-PW} with $\mD=\mD_i$.

\begin{theorem}[\cite{r-IF-k}]
For a Riemannian almost $k$-product manifold $(M,g;{\mD}_1,\ldots,{\mD}_k)$ we have
\begin{eqnarray}\label{E-PW3-k}
\nonumber
 && \Div \sum\nolimits_{\,i} (H_{i}+H^\bot_{i}) = 2\,{\rm S}_{\,{\mD}_1,\ldots,{\mD}_k} \\
 && +\sum\nolimits_{\,i} \big(\<h_{i},h_{i}\>-\<H_{i},H_{i}\>-\<T_{i},T_{i}\>
 +\<h^\bot_{i},h^\bot_{i}\>-\<H^\bot_{i},H^\bot_{i}\>-\<T^\bot_{i},T^\bot_{i}\>\big).
\end{eqnarray}
\end{theorem}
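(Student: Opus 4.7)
The proof plan is to deduce the general formula from the classical Walczak identity \eqref{E-PW}, applied repeatedly to each splitting $TM = {\mD}_i \oplus {\mD}_i^\bot$, and then sum over $i$.

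First I would observe that for each $i \in \{1,\ldots,k\}$ the distribution ${\mD}_i$ is non-degenerate with non-degenerate orthogonal complement ${\mD}_i^\bot$, so the pair $({\mD}_i,{\mD}_i^\bot)$ forms an orthogonal complementary decomposition of $TM$ to which formula \eqref{E-PW} applies verbatim. Taking $\mD = {\mD}_i$ in \eqref{E-PW} gives
\[
 \Div(H_i + H_i^\bot) = {\rm S}_{\,{\mD}_i,{\mD}_i^\bot} + \<h_i,h_i\> + \<h_i^\bot,h_i^\bot\>
 - \<H_i,H_i\> - \<H_i^\bot,H_i^\bot\> - \<T_i,T_i\> - \<T_i^\bot,T_i^\bot\>.
\]
Summing these $k$ identities over $i=1,\ldots,k$ produces, on the left, exactly $\Div \sum_i (H_i + H_i^\bot)$ by linearity of the divergence, and, on the right, the quadratic sum that matches term-by-term the one claimed in \eqref{E-PW3-k}.

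The only remaining point is the scalar-curvature term $\sum_i {\rm S}_{\,{\mD}_i,{\mD}_i^\bot}$. Here I would invoke the preceding Proposition, which states precisely
\[
 2\,{\rm S}_{\,{\mD}_1,\ldots,{\mD}_k} = \sum\nolimits_{\,i} {\rm S}_{\,{\mD}_i,{\mD}_i^\bot}.
\]
Intuitively, each mixed plane spanned by vectors in ${\mD}_i$ and ${\mD}_j$ (with $i\neq j$) contributes to ${\rm S}_{\,{\mD}_i,{\mD}_i^\bot}$ and to ${\rm S}_{\,{\mD}_j,{\mD}_j^\bot}$, so summing over $i$ double-counts it — accounting for the factor $2$ on the right of \eqref{E-PW3-k}. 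Combining this with the summed Walczak identities yields the claimed equation.

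The proof is essentially assembly of a previously established identity and a bookkeeping observation, so there is no genuine obstacle; the only care needed is to verify that \eqref{E-PW} is applied with the same sign conventions for $h_i, H_i, T_i$ (associated with ${\mD}_i$) and $h_i^\bot, H_i^\bot, T_i^\bot$ (associated with ${\mD}_i^\bot$) that are used in the statement of \eqref{E-PW3-k}. Since those conventions are inherited directly from the definitions in the Preliminaries, no sign rearrangement is required.
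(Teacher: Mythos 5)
Your proof is correct and follows essentially the same route the paper takes (illustrated there for $k=3$): apply the Walczak formula \eqref{E-PW} to each pair $({\mD}_i,{\mD}_i^\bot)$, sum over $i$, and use the double-counting identity $\sum_i {\rm S}_{\,{\mD}_i,{\mD}_i^\bot}=2\,{\rm S}_{\,{\mD}_1,\ldots,{\mD}_k}$ from the preceding Proposition. Nothing further is needed.
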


\begin{example}\rm
To illustrate the proof of \eqref{E-PW3-k} for $k>2$, consider the case of $k=3$.
Using \eqref{E-PW} for two distributions, ${\mD}_1$ and ${\mD}_{1}^\bot={\mD}_2\oplus{\mD}_3$, according to \eqref{E-PW} we get
\begin{eqnarray}\label{E-PW3-1}
\nonumber
 \Div(H_1 + H_{1}^\bot) \eq 2\,{\rm S}_{\,{\mD}_1, {\mD}_{1}^\bot} + (\<h_1,h_1\> - \<H_1,H_1\> - \<T_1,T_1\>) \\
 \plus (\<h_{1}^\bot,h_{1}^\bot\> -\<H_{1}^\bot,H_{1}^\bot\> - \<T_{1}^\bot,T_{1}^\bot\>),
\end{eqnarray}
and similarly for $({\mD}_2,\,{\mD}_{2}^\bot)$ and $({\mD}_3,\,{\mD}_{3}^\bot)$.
Summing 3 copies of \eqref{E-PW3-1}, we obtain \eqref{E-PW3-k} for $k=3$:
\begin{eqnarray*}
 && \Div\sum\nolimits_{\,i}\big( H_i+ H_{i}^\bot\big) = 2\,{\rm S}_{\,{\mD}_1,{\mD}_2,{\mD}_3} \\
 && +\sum\nolimits_{\,i}\big(\<h_i,h_i\> -\<H_i,H_i\> - \<T_i,T_i\>
 +\<h_{i}^\bot,h_{i}^\bot\> - \<H_{i}^\bot,H_{i}^\bot\> -\<T_{i}^\bot,T_{i}^\bot\>\big) .
\end{eqnarray*}
\end{example}

\begin{remark}\rm Using Stokes' Theorem for \eqref{E-PW3-k} on a closed manifold $(M,g;{\mD}_1,\ldots,{\mD}_k)$ yields the integral formula for any $k\in\{2,\ldots,n\}$,
which for $k=2$ directly follows from \eqref{E-PW},
\begin{eqnarray*}
 \int_M\big( 2\,{\rm S}_{\,{\cal D}_1,\ldots,{\cal D}_k}
 +\sum\nolimits_{\,i}(\<h_{i},h_{i}\>-\<H_{i},H_{i}\>-\<T_{i},T_{i}\> \\
 +\<h^\bot_{i},h^\bot_{i}\>-\<H^\bot_{i},H^\bot_{i}\>-\<T^\bot_{i},T^\bot_{i}\>)\big)\,{\rm d}\vol_g =0.
\end{eqnarray*}
\end{remark}

\section{Adapted variations of metric}

We consider smooth $1$-parameter variations $\{g_t\in{\rm Riem}(M):\,|t|<\eps\}$ of the metric $g_0 = g$.
Let the infinitesimal variations, represented by a symmetric $(0,2)$-tensor
\[
 {B}(t)\equiv\partial g_t/\partial t,
\]
be supported in a relatively compact domain $\Omega$ in $M$,
i.e., $g_t=g$ and ${B}_t=0$ outside $\Omega$ for $|t|<\eps$.
A variation $g_t$ is \emph{volume-preserving} if ${\rm Vol}(\Omega,g_t) = {\rm Vol}(\Omega,g)$ for all $t$.
 We~adopt the notations $\partial_t \equiv \partial/\partial t,\ {B}\equiv{\dt g_t}_{\,|\,t=0}=\dot g$,
but we shall also write $B$ instead of $B_t$ to make formulas easier to read, wherever it does not lead to confusion.
Since $B$ is symmetric, then $\<C,\,B\>=\<{\rm Sym}(C),\,B\>$ for any  $(0,2)$-tensor $C$.
Denote by $\otimes$ the product of tensors.

\begin{definition}
\rm
A family of metrics
\begin{equation*}
 \{g(t)\in{\rm Riem}(M,\mD_1,\ldots\mD_k):\, |t|<\eps\}
\end{equation*}
such that $g_0=g$ will be called an \textit{adapted variation}.
In other words, $\mD_i$ and $\mD_j$ are $g_t$-orthogonal for all $i\ne j$ and~$t$.
 An adapted variation $g_t$ will be called a ${\cal D}_j$-\textit{variation} (for some $j\in[1,k]$) if
\[
 g_t(X,Y)=g_0(X,Y),\quad X,Y\in\mD_j^\bot,\quad |t|<\eps.
\]
\end{definition}

For an adapted variation we have $g_t=\bigoplus_{\,j=1}^k g_j(t)$, where $g_j(t) =g_t|_{\,\mD_j}$.
Thus, the tensor $B_t=\dt\, g_t$ of an adapted variation of metric $g$ on $(M;\mD_1,\ldots,\mD_k)$ can be decomposed into the sum of derivatives of ${\cal D}_j$-variations; namely, $B_t=\sum_{\,j=1}^k {B}_j(t)$, where ${B}_j(t) =\dt\,g_j(t) =B_t|_{\,\mD_j}$.

\begin{lemma}
Let a local adapted frame $\{E_a\}$ evolve by $g_t\in{\rm Riem}(M,\mD_1,\ldots\mD_k)$ according to
\begin{equation*}
 \dt E_a=-(1/2)\,{B}^\sharp_t(E_a).
\end{equation*}
 Then, $\{E_a(t)\}$ is a $g_t$-orthonormal adapted frame for all $\,t$.
\end{lemma}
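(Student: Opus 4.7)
The plan is to check two things: that the evolving frame stays inside the respective distributions (so it remains adapted), and that $g_t$-inner products among the $E_a(t)$ are preserved in $t$ (so it remains $g_t$-orthonormal, with the same signs $\eps_a$).

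For the orthonormality, I would simply differentiate $g_t(E_a(t), E_b(t))$ in $t$. By the product rule,
\[
 \dt\, g_t(E_a, E_b) = B_t(E_a, E_b) + g_t(\dt E_a, E_b) + g_t(E_a, \dt E_b).
\]
Substituting the prescribed evolution and using the defining property $g_t(B_t^\sharp X, Y) = B_t(X,Y)$ of the sharp isomorphism (taken with respect to $g_t$), the last two terms contribute $-\tfrac12 B_t(E_a,E_b) - \tfrac12 B_t(E_b,E_a)$. Symmetry of $B_t$ makes this cancel the first term exactly, so $\dt\,g_t(E_a,E_b) = 0$ and $g_t(E_a(t), E_b(t)) = g_0(E_a(0), E_b(0)) = \eps_a \delta_{ab}$ for all $t$.

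For the adapted property, I would use that the variation is adapted, which by the decomposition $g_t = \bigoplus_{j=1}^k g_j(t)$ noted just before the lemma means $B_t(X,Y) = 0$ whenever $X \in \mD_i$, $Y \in \mD_j$ with $i \ne j$. Combined with the $g_t$-orthogonality of the distributions, this forces $B_t^\sharp$ (with respect to $g_t$) to preserve each $\mD_i$: indeed, for $X \in \mD_i$ and $Y \in \mD_j$ with $j \ne i$,
\[
 g_t(B_t^\sharp X, Y) = B_t(X,Y) = 0,
\]
so $B_t^\sharp X \in \mD_i$. Thus the ODE $\dt E_a = -\tfrac12 B_t^\sharp(E_a)$ is tangent to the fixed distribution containing $E_a(0)$, and uniqueness of ODE solutions keeps $E_a(t)$ in that same $\mD_i$ for all $t$ in the interval of existence.

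The only subtlety is the circularity concern that $B_t^\sharp$ depends on $g_t$ and we are using $g_t$-orthogonality of the distributions; but this orthogonality is exactly the hypothesis $g_t \in \mathrm{Riem}(M,\mD_1,\ldots,\mD_k)$ built into the definition of an adapted variation, so there is no obstacle. I expect no hard step: the two short computations above suffice.
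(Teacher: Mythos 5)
Your proof is correct, and the orthonormality computation (differentiate $g_t(E_a,E_b)$, use symmetry of $B_t$ and the definition of $\sharp$ with respect to $g_t$) is exactly the paper's argument. You additionally verify that each $E_a(t)$ stays in its original distribution $\mD_i$ via the block-diagonality of $B_t$ and uniqueness for the ODE --- a point the paper leaves implicit behind ``From this the claim follows,'' so this is a harmless and in fact welcome completion of the same approach.
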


\begin{proof}
For $\{E_a(t)\}$ we have
\begin{eqnarray*}
 &&\dt(g_t(E_a, E_b)) = g_t(\dt E_a(t), E_b(t)) +g_t(E_a(t), \dt E_b(t))
 +(\dt g_t)(E_a(t), E_b(t))  \\
 &&= {B}_t(E_a(t), E_b(t))-\frac12\,g_t({B}_t^\sharp(E_a(t)), E_b(t)) -\frac12\,g_t(E_a(t), {B}_t^\sharp(E_b(t)))=0.
\end{eqnarray*}
From this the claim follows.
\end{proof}

The following proposition was proved in \cite{rz-2} when $k=2$.

\begin{proposition}\label{propvar1}
If $g_t$ is a ${\cal D}_j$-variation of $g$ on $(M;\mD_1,\ldots,\mD_k)$, then
\begin{eqnarray*}
 &&\hskip-9mm
 \dt\<{h}_j^\bot, {h}_j^\bot\> =  -\<(1/2)\Upsilon_{h_j^\bot,h_j^\bot},\ B_j\>,\\
 &&\hskip-9mm \dt \<h_j,\ h_j\> = \<\,\Div{h}_j + {\cal K}_j^\flat,\ B_j\> - \Div\<h_j, B_j\>,\\
 &&\hskip-9mm \dt g({H}_j^\bot, {H}_j^\bot) = -\<\,({H}_j^\bot)^\flat\otimes({H}_j^\bot)^\flat,\ B_j\>,\\
 &&\hskip-9mm \dt g(H_j, H_j) = \<\,(\Div H_j)\,g_j,\ B_j\> -\Div((\tr_{\,{\mD}_j} B_j^\sharp) H_j), \\
 &&\hskip-9mm \dt\<{T}_j^\bot, {T}_j^\bot\> = \<\,(1/2)\Upsilon_{T_j^\bot,T_j^\bot},\ B_j\>,\\
 &&\hskip-9mm \dt\<T_j,\ T_j\> = \< 2\,{\cal T}_j^\flat,\ B_j\> ,
\end{eqnarray*}
and for $i\ne j$ (when $k>2$) we have dual equations
\begin{eqnarray*}
 &&\hskip-9mm \dt \<h_i^\bot,\ h_i^\bot\> = \<\,\Div{h}_i^\bot + ({\cal K}_i^\bot)^\flat,\ B_j\> - \Div\<h_i^\bot, B_j\>,\\
 &&\hskip-9mm
 \dt\<{h}_i, {h}_i\> =  \<-(1/2)\Upsilon_{h_i,h_i},\ B_j\>,\\
 &&\hskip-9mm \dt g(H_i^\bot, H_i^\bot) = \<\,(\Div H_i^\bot)\,g_j,\ B_j\>
 -\Div((\tr_{\,{\mD}_i^\bot} (B_j)^\sharp) H_i^\bot), \\
 &&\hskip-9mm \dt g({H}_i, {H}_i) = -\<\,{H}_i^\flat\otimes{H}_i^\flat,\ B_j\>,\\
 &&\hskip-9mm \dt\<T_i^\bot,\ T_i^\bot\> = \< 2\,({\cal T}_i^\bot)^\flat,\ B_j\> ,\\
 &&\hskip-9mm \dt\<{T}_i, {T}_i\> = \<\,(1/2)\Upsilon_{T_i,T_i},\ B_j\>.
\end{eqnarray*}
\end{proposition}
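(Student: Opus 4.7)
The plan is to reduce the proposition to the $k=2$ case established in~\cite{rz-2}. The key observation is that a $\mD_j$-variation $g_t$ only alters the metric on $\mD_j$ and leaves every orthogonal splitting $TM = \mD_i \oplus \mD_i^\bot$ fixed (for any $i$, including $i=j$ and $i\ne j$). Moreover, every quantity appearing on the two sides of the twelve formulas is defined using only the two-way splitting $TM = \mD_i \oplus \mD_i^\bot$ for some $i$, not the finer decomposition of $\mD_i^\bot$ into the remaining $\mD_l$'s. Hence each equation can be interpreted and verified in a $2$-product setting.

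First, for the six equations indexed by $j$ alone, I view $(M,g;\mD_j,\mD_j^\bot)$ as a Riemannian almost $2$-product manifold. A $\mD_j$-variation in the $k$-product sense is exactly a $\mD$-variation in the sense of~\cite{rz-2} with $\mD=\mD_j$. The six identities proved there for $\dt\<h,h\>$, $\dt\<h^\bot,h^\bot\>$, $\dt g(H,H)$, $\dt g(H^\bot,H^\bot)$, $\dt\<T,T\>$, $\dt\<T^\bot,T^\bot\>$ translate verbatim into the first block of the proposition.

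Next, for the six dual equations with $i\ne j$, I instead view $(M,g;\mD_i,\mD_i^\bot)$ as a $2$-product manifold. Since $\mD_j \subset \mD_i^\bot$, the same $\mD_j$-variation is a $\mD^\bot$-variation in the $k=2$ sense, with the additional property that $B_j$ vanishes on $\bigoplus_{l\ne i,j}\mD_l \subset \mD_i^\bot$. The $\mD^\bot$-variation formulas from the $k=2$ setting (obtainable from the $\mD$-variation formulas by the symmetry $\mD \leftrightarrow \mD^\bot$, which swaps $h\leftrightarrow h^\bot$, $T\leftrightarrow T^\bot$, $H\leftrightarrow H^\bot$, ${\cal K}\leftrightarrow{\cal K}^\bot$, etc.) yield the six dual equations. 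Since each right-hand side is linear in the variation tensor, the fact that $B_j$ is supported only in the $\mD_j$-block of $\mD_i^\bot$ causes no simplification; the formulas hold as stated.

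The main obstacle I anticipate is bookkeeping rather than new computation. One must check, term by term, that objects like $\Div h_i^\bot + ({\cal K}_i^\bot)^\flat$, $\Upsilon_{h_i,h_i}$, $(\Div H_i^\bot)\,g_j$, $H_i^\flat \otimes H_i^\flat$ defined in the $k$-product setting agree with their $2$-product counterparts, and that the total-divergence terms $\Div\<h_i^\bot, B_j\>$ and $\Div((\tr_{\mD_i^\bot} B_j^\sharp)\,H_i^\bot)$ come out with the correct support and sign. The substantive differentiation --- carried out via the evolving adapted frame $\dt E_a = -\tfrac12 B_t^\sharp(E_a)$ from the preceding Lemma together with $\dt({\rm d}\vol_{g_t}) = \tfrac12(\tr_{g_t} B_t)\,{\rm d}\vol_{g_t}$ --- is already in~\cite{rz-2} and need not be repeated here.
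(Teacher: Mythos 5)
Your reduction to the $k=2$ results of \cite{rz-2} via the pairs $(\mD_j,\mD_j^\bot)$ and $(\mD_i,\mD_i^\bot)$ is correct and is essentially the argument the paper intends: it states the proposition without proof, citing only that the $k=2$ case was established in \cite{rz-2}, so the pairwise reduction you spell out (including the observation that $B_j$, being supported on $\mD_j\subset\mD_i^\bot$, is a special $\mD_i^\bot$-variation and that the pairing with $B_j$ turns $g_i^\bot$ into $g_j$) is exactly the implicit content. Nothing further is needed.
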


 For any variation $g_t$ of metric $g$ on $M$ with $B=\dt g$ we have
\begin{equation}\label{E-dotvolg}
 \partial_t\,\big({\rm d}\vol_{g}\!\big) = \frac12\,(\tr_{g} B)\,{\rm d}\vol_{g}.
\end{equation}
Differentiating the well-known formula
 $\Div X \cdot {\rm d}\vol_g = {\cal L}_{X}({\rm d}\vol_g)$
and using \eqref{E-dotvolg}, we obtain
\begin{equation}\label{dtdiv}
 \dt\,(\Div X) = \Div (\dt X) +\frac{1}{2}\,X(\tr_{g} B)
\end{equation}
for any variation $g_t$ of metric and a $t$-dependent vector field $X$ on $M$.
By \eqref{dtdiv} and \eqref{E-dotvolg}, using the Divergence Theorem, we have
\begin{equation}\label{E-DivThm-2}
 \frac{d}{dt}\int_M (\Div X)\,{\rm d}\vol_g  =\int_M \Div\big(\dt X+\frac12\,(\tr_g B) X\big)\,{\rm d}\vol_g = 0
\end{equation}
for any variation $g_t$ with ${\rm supp}\,(\dt g)\subset\Omega$,
and $t$-dependent $X\in\mathfrak{X}_M$ with ${\rm supp}\,(\dt X)\subset\Omega$.

The following theorem allows us to restore the partial Ricci curvature \eqref{E-main-0ij}.
It is based on calculating the variations with respect to $g$ of components in \eqref{E-PW}
and using \eqref{E-DivThm-2} for divergence~terms.
By this theorem and Definition~\ref{D-Ric-D} we conclude that
an adapted metric $g$ is critical for the action \eqref{Eq-Smix}
with respect to volume-preserving adapted variations of metric if and only if \eqref{E-gravity} holds.

\begin{theorem}[see \cite{rz-2}]
A metric $g\in{\rm Riem}(M,{\mD}_1,\ldots,\mD_k)$ is critical for the geometrical part of \eqref{Eq-Smix},
i.e., $\Lambda=0={\cal L}$, with respect to volume-preserving adapted variations if and only if
\begin{eqnarray}\label{ElmixDDvp}
\nonumber
 && \Div{h_j} +{\cal K}_j^\flat -\frac12\Upsilon_{h_j^\bot,h_j^\bot}
 +({H}_j^\bot)^\flat\otimes({H}_j^\bot)^\flat -\frac12\Upsilon_{T_j^\bot,T_j^\bot} -2\,{\cal T}_j^\flat
 +\sum\nolimits_{\,i\ne j}\big(\Div{h_i^\bot}|_{\mD_j}\\
\nonumber
 && +\,(P_j{\cal K}_i^\bot)^\flat +(P_j{H}_i)^\flat\otimes(P_j{H}_i)^\flat-\frac12\Upsilon_{P_jh_i,P_jh_i}
 -\frac12\Upsilon_{P_jT_i,P_jT_i} -2\,(P_j{\cal T}_i^\bot)^\flat\big) \\
 && =\,\big(\,{\rm S}_{\,{\mD}_1,\ldots,\mD_k} - \Div(H_j + \sum\nolimits_{\,i\ne j} {H}_i^\bot) + \lambda_j\big)\,g_j
\end{eqnarray}
for some $\lambda_j\in\RR$ and $1\le j\le k$.
\end{theorem}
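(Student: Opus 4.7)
The plan is to compute the first variation of $J_{\mD}^g:g\mapsto\int_M {\rm S}_{\mD_1,\ldots,\mD_k}\,{\rm d}\vol_g$ under an adapted variation and then impose the volume-preserving constraint via a Lagrange multiplier. Writing $B=\dt g_t|_{t=0}=\sum_j B_j$ with each $B_j$ a $\mD_j$-variation supported on $\mD_j\otimes\mD_j$, by linearity it suffices to derive the critical equation for a single $\mD_j$-variation and then to range over $j=1,\ldots,k$.

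The convenient starting point is the Walczak-type identity \eqref{E-PW3-k}, rewritten as $2\,{\rm S}_{\mD_1,\ldots,\mD_k}=\Div\sum_i(H_i+H_i^\bot)-f$, where $f=\sum_i(\<h_i,h_i\>+\<h_i^\bot,h_i^\bot\>-\<H_i,H_i\>-\<H_i^\bot,H_i^\bot\>-\<T_i,T_i\>-\<T_i^\bot,T_i^\bot\>)$. Since $\dt\sum_i(H_i+H_i^\bot)$ is compactly supported in $\Omega$, identity \eqref{E-DivThm-2} makes the divergence integral stationary at first order, so $\frac{d}{dt}J_{\mD}^g|_{t=0}=-\tfrac12\frac{d}{dt}\int_M f\,{\rm d}\vol_{g_t}|_{t=0}$. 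To differentiate $\int_M f\,{\rm d}\vol_{g_t}$, I apply Proposition~\ref{propvar1} term by term: the six primary formulas for the diagonal block $i=j$ and the six dual formulas for each off-diagonal block $i\ne j$; combined with the volume-form variation \eqref{E-dotvolg}, one picks up an extra contribution $\tfrac12 f\,\<g_j,B_j\>\,{\rm d}\vol_g$. The output is a sum of three kinds of pieces: tensorial pairings $\<\cdot,B_j\>$ with building blocks $\Upsilon_{\cdot,\cdot}$, ${\cal K}_i^\flat$, ${\cal T}_i^\flat$, $\Div h_i$ and mean-curvature squares; scalar multiples of $g_j$ proportional to $\Div H_j$ and to $\Div H_i^\bot$ (arising from differentiating $g(H_i,H_i)$ and $g(H_i^\bot,H_i^\bot)$); and exact divergences of compactly supported vector fields, which vanish upon integration. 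Packaging everything produces $\frac{d}{dt}J_{\mD}^g|_{t=0}=\int_M\<F_j,B_j\>\,{\rm d}\vol_g$ for a symmetric $(0,2)$-tensor $F_j$ on $\mD_j$ built from these invariants.

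Volume preservation imposes the single linear constraint $\int_M\<g_j,B_j\>\,{\rm d}\vol_g=0$, so by the Lagrange-multiplier principle criticality is equivalent to $F_j=\mu_j\,g_j$ for some constant $\mu_j\in\RR$. Using $f=\Div\sum_i(H_i+H_i^\bot)-2\,{\rm S}_{\mD_1,\ldots,\mD_k}$ to trade the $\tfrac12 f\,g_j$ contribution for ${\rm S}_{\mD_1,\ldots,\mD_k}\,g_j$ plus a divergence-of-mean-curvature multiple of $g_j$, and then shifting the residual trace terms $(\Div H_j+\sum_{i\ne j}\Div H_i^\bot)\,g_j$ from the tensor side to the scalar side, recasts $F_j=\mu_j g_j$ as the Einstein-type identity \eqref{ElmixDDvp}, with $\lambda_j$ absorbing $\mu_j$. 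The main obstacle is the bookkeeping in the second step: one must verify that every $\Div(\cdot)$ produced by Proposition~\ref{propvar1} (namely $-\Div\<h_j,B_j\>$, $-\Div\<h_i^\bot,B_j\>$, and the divergences of $(\tr B_j^\sharp)H_\cdot$) indeed has compact support inside $\Omega$ and so drops out, and then correctly partition $F_j$ into the ``Ricci-type'' tensorial part on the left of \eqref{ElmixDDvp} and its scalar-times-$g_j$ complement on the right, consistently with the normalisation of \eqref{E-PW3-k}.
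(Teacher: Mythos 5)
Your proposal is correct and follows essentially the same route as the paper: rewrite $2\,{\rm S}_{\,\mD_1,\ldots,\mD_k}$ via \eqref{E-PW3-k} as a total divergence (killed at first order by \eqref{E-DivThm-2}) minus the quadratic invariants, vary those term by term with Proposition~\ref{propvar1} and \eqref{E-dotvolg}, discard compactly supported divergences, and impose the volume constraint via a Lagrange multiplier to get \eqref{ElmixDDvp}. The only cosmetic difference is that you carry the sign through $f=-Q(g)$ and invoke the multiplier principle explicitly, whereas the paper first derives the unconstrained equation \eqref{ElmixDD} and then appends $\lambda_j$.
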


\begin{proof} 
Let $g_t$ be a ${\cal D}_j$-variation for some $j$, and let
\[
 Q(g):=2\,{\rm S}_{\,{\mD}_1,\ldots,\mD_k} -\Div\sum\nolimits_{\,i}(H_i+H_i^\bot).
\]
Then
\begin{equation*}
 {\rm\frac{d}{dt}}\,J_{{\cal D},\Omega}(g_t)_{\,|\,t=0} ={\rm\frac{d}{dt}}\int_{\Omega} Q(g_t)\,{\rm d}\vol_{g_t\,|\,t=0}
 +{\rm\frac{d}{dt}}\int_{\Omega}\Div\sum\nolimits_{\,i}(H_i+H_i^\bot)\,{\rm d}\vol_{g_t\,|\,t=0}.
\end{equation*}
For adapted variations of $g$ supported in $\Omega$, both fields $\dt\sum\nolimits_{\,i}(H_i+H_i^\bot)$ and
$(\tr {B}^\sharp)\sum\nolimits_{\,i}(H_i+H_i^\bot)$ vanish on $\partial \Omega$,
and by \eqref{dtdiv} we get ${\rm\frac{d}{dt}}\int_{\Omega}\Div\sum\nolimits_{\,i}(H_i+H_i^\bot)\,{\rm d}\vol_g =0$.
Thus, we have
\begin{equation*}
 {\rm\frac{d}{dt}}\,J_{{\cal D},\Omega}(g_t)_{\,|\,t=0} ={\rm\frac{d}{dt}}\int_{\Omega} Q(g_t)\,{\rm d}\vol_{g_t \,|\,t=0},
\end{equation*}
and $Q(g)$ can be presented using \eqref{E-PW} and \eqref{E-PW3-k} as
\begin{equation*}
 Q(g) = \frac12\sum\nolimits_{\,i}\big(\<T_{i},T_{i}\> -\<h_{i},h_{i}\> +\<H_{i},H_{i}\> +\<T^\bot_{i},T^\bot_{i}\> -\<h^\bot_{i},h^\bot_{i}\> +\<H^\bot_{i},H^\bot_{i}\>\big)\,.
\end{equation*}
By Proposition~\ref{propvar1}, we get
\begin{eqnarray*}
 &&
 \dt\big(-\<{h}_j^\bot, {h}_j^\bot\>-\<h_j,\ h_j\> +g({H}_j^\bot, {H}_j^\bot)+g(H_j, H_j)+\<{T}_j^\bot,{T}_j^\bot\>
 +\<T_j, T_j\>\big) \\
 && =\<\,-\Div{h}_j -{\cal K}_j^\flat -({H}_j^\bot)^\flat\otimes({H}_j^\bot)^\flat +(1/2)\Upsilon_{h_j^\bot,h_j^\bot} +(1/2)\Upsilon_{T_j^\bot,T_j^\bot}
 + 2\,{\cal T}_j^\flat  \\
 && +\, (\Div H_j)\,g_j,\ B_j\> + \Div\big(\<h_j, B_j\> - (\tr_{\,{\mD}_j} B_j^\sharp) H_j\big),
\end{eqnarray*}
and for $i\ne j$ (when $k>2$) we have
\begin{eqnarray*}
 && \dt\big( -\<h_i^\bot,\ h_i^\bot\> -\<{h}_i, {h}_i\>
 +g(H_i^\bot, H_i^\bot) +g(H_i, H_i) +\<T_i^\bot, T_i^\bot\> +\<T_i, T_i\> \big) \\
 && = \<\,-\Div{h}_i^\bot - ({\cal K}_i^\bot)^\flat -{H}_i^\flat\otimes{H}_i^\flat
  +(1/2)\Upsilon_{h_i,h_i} +(1/2)\Upsilon_{T_i,T_i} + 2\,({\cal T}_i^\bot)^\flat \\
 && +\,(\Div H_i^\bot)\,g_j,\ B_j\> +\Div(\<h_i^\bot, B_j\> -(\tr_{\,{\mD}_i^\bot}(B_j)^\sharp) H_i^\bot).
\end{eqnarray*}
We use the above to derive $\dt Q(g)$.
Removing integrals of divergences of vector fields compactly supported in $\Omega$, we get
\begin{eqnarray}\label{E-Sc-var1a}
\nonumber
&& \int_{\Omega}\dt Q(g_t)_{\,|\,t=0}\,{\rm d}\vol_g
 = \frac 12\int_{\Omega}\Big(\sum\nolimits_{\,i\ne j}\<-\Div{h_i^\bot}-({\cal K}_i^\bot)^\flat -{H}_i^\flat\otimes{H}_i^\flat +\frac12\,\Upsilon_{h_i,h_i}\\
\nonumber
&& +\,\frac12\,\Upsilon_{T_i,T_i} +2\,({\cal T}_i^\bot)^\flat +(\Div H_i^\bot)\,g_j, B_j\>
 +\<-\Div{h_j}-{\cal K}_j^\flat -({H}_j^\bot)^\flat\otimes({H}_j^\bot)^\flat \\
 && +\,\frac12\Upsilon_{h_j^\bot,h_j^\bot}
 +\frac12\Upsilon_{T_j^\bot,T_j^\bot}+2\,{\cal T}_j^\flat+(\Div H_j)\,g_j, B_j\>\Big)\,{\rm d}\vol_g,
\end{eqnarray}
where ${B}_j={\dt g_t}_{\,|\,t=0}$. Since
\[
 {\rm\frac{d}{dt}}\,J_{{\cal D},\Omega}({g}_t)_{|\,t=0}
 =\int_{\Omega}\dt Q(g_t)_{\,|\,t=0}\,{\rm d}\vol_g + \int_{\Omega} Q(g)\,\big(\dt\,{\rm d}\vol_{g_{t} {\,|\,t=0}}\big),
\]
by \eqref{E-Sc-var1a} and \eqref{E-dotvolg}, we have
\begin{eqnarray}\label{E-varJh-init2}
 \nonumber
 && {\rm\frac{d}{dt}}\,J_{{\cal D},\Omega}({g}_t)_{|\,t=0}
 = \int_{\Omega}\Big[\<-\Div{h_j}-{\cal K}_j^\flat-({H}_j^\bot)^\flat\otimes({H}_j^\bot)^\flat+\frac12\,\Upsilon_{h_j^\bot,h_j^\bot}+\frac12\,\Upsilon_{T_j^\bot,T_j^\bot}+2\,{\cal T}_j^\flat \\
 \nonumber
 &&\qquad +\sum\nolimits_{\,i\ne j}\big(-\Div{h_i^\bot}-({\cal K}_i^\bot)^\flat
 -{H}_i^\flat\otimes{H}_i^\flat+\frac12\,\Upsilon_{h_i,h_i}+\frac12\,\Upsilon_{T_i,T_i}+2\,({\cal T}_i^\bot)^\flat\big)\\
 && \qquad + \big(\,{\rm S}_{\,{\mD}_1,\ldots,\mD_k} -\Div(H_j +\sum\nolimits_{\,i\ne j} {H}_i^\bot)\big)\,g_j,\ B_j\big\>\Big]\,{\rm d}\vol_g.
\end{eqnarray}
If $g$ is critical for $J_{{\cal D},\Omega}$ with respect to ${\cal D}_j$-variations of $g$,
then the integral in \eqref{E-varJh-init2} is zero for any symmetric $(0,2)$-tensor $B_j$.
This yields the ${\cal D}_j$-component of Euler-Lagrange equation
\begin{eqnarray}\label{ElmixDD}
\nonumber
 && \Div{h_j}+{\cal K}_j^\flat+({H}_j^\bot)^\flat\otimes({H}_j^\bot)^\flat
  -\,\frac12\Upsilon_{h_j^\bot,h_j^\bot} -\frac12\Upsilon_{T_j^\bot,T_j^\bot} -2\,{\cal T}_j^\flat \\
\nonumber
 &&\hskip-6mm +\!\sum\nolimits_{\,i\ne j}
 \big(\Div{h_i^\bot}|_{\mD_j}+(P_j{\cal K}_i^\bot)^\flat{+}(P_j{H}_i)^\flat\otimes(P_j{H}_i)^\flat-\frac12\,\Upsilon_{P_jh_i, P_jh_i}
 {-}\frac12\,\Upsilon_{P_jT_i, P_jT_i} -2\,(P_j{\cal T}_i^\bot)^\flat\big) \\
 && =\big(\,{\rm S}_{\,{\mD}_1,\ldots,\mD_k} -\Div(H_j +\sum\nolimits_{\,i\ne j} {H}_i^\bot)\big)\,g_j .
\end{eqnarray}
For volume-preserving ${\cal D}_j$-variations, the Euler-Lagrange equation
(of the geometrical part of \eqref{Eq-Smix} with respect to volume-preserving adapted variations)
will be \eqref{ElmixDDvp} instead of \eqref{ElmixDD}. 
\end{proof}

\begin{remark}\rm
Using the partial Ricci tensor \eqref{E-Rictop2} and replacing $\Div{h_j}$ and ${\Div}\,h_i^\bot$ for $i\ne j$ in \eqref{ElmixDDvp}
according to \eqref{E-genRicN}, we can rewrite \eqref{ElmixDDvp} as
\begin{eqnarray}\label{E-main-0i}
\nonumber
 &&\hskip-6mm r_{j}-\<h_j,\,H_j\>+{\cal A}_j^\flat -{\cal T}_j^\flat +\,\Psi_j^\bot -{\rm Def}_{\mD_j^\bot}\,H_j^\bot
 +{\cal K}_j^\flat+({H}_j^\bot)^\flat\otimes({H}_j^\bot)^\flat-\frac12\Upsilon_{h_j^\bot,h_j^\bot}-\frac12\Upsilon_{T_j^\bot,T_j^\bot}\\
\nonumber
 &&+\sum\nolimits_{\,i\ne j}
 \big(r^\bot_{i}|_{\mD_j}-\<h_i^\bot|_{\mD_j},\,H_i^\bot\>+(P_j{\cal A}^\bot_i)^\flat
 -(P_j{\cal T}^\bot_i)^\flat+\Psi_i|_{\mD_j}-{\rm Def}_{\mD_j}\,H_i +(P_j{\cal K}_i^\bot)^\flat\\
\nonumber
 && +\,(P_j{H}_i)^\flat\otimes(P_j{H}_i)^\flat -\frac12\Upsilon_{P_jh_i,P_jh_i}-\frac12\Upsilon_{P_jT_i,P_jT_i}  \big) \\
 && =\big(\,{\rm S}_{\,{\mD}_1,\ldots,\mD_k} -\Div(H_j +\sum\nolimits_{\,i\ne j}{H}_i^\bot) +\lambda_j \big)\,g_j ,
 \quad j=1,\ldots,k.
\end{eqnarray}
\end{remark}

\begin{example}
\rm
A pair $({\cal D}_i,{\cal D}_j)$ with $i\ne j$ of distributions on a Riemannian almost $k$-product manifold
$(M,g;{\cal D}_1,\ldots,{\cal D}_k)$ is called
\textit{mixed integrable}, if $T_{i,j}(X,Y)=0$ for all $X\in{\cal D}_i$ and $Y\in{\cal D}_j$, see \cite{r-IF-k}.
 Let $(M,g;{\cal D}_1,\ldots,{\cal D}_k)$ with $k>2$ has integrable distributions
${\cal D}_1,\ldots,{\cal D}_k$ and each pair $({\cal D}_i,{\cal D}_j)$ is mixed integrable.
Then $T_{l}^\bot(X,Y)=0$ for all $l\le k$ and $X\in{\cal D}_{i},\,Y\in{\cal D}_{j}$ with $i\ne j$, see \cite[Lemma~2]{r-IF-k}.
In this case, \eqref{E-main-0i} reads~as
\begin{eqnarray*}
 && r_{j}-\<h_j,\,H_j\>+{\cal A}_j^\flat +\Psi_j^\bot -\frac12\Upsilon_{h_j^\bot,h_j^\bot}
 +({H}_j^\bot)^\flat\otimes({H}_j^\bot)^\flat -{\rm Def}_{\mD_j^\bot}\,H_j^\bot \\
 &&+\sum\nolimits_{\,i\ne j} \big(r^\bot_{i}|_{\mD_j}-\<h_i^\bot|_{\mD_j},\,H_i^\bot\>+(P_j{\cal A}^\bot_i)^\flat
 +\Psi_i|_{\mD_j}-{\rm Def}_{\mD_j}\,H_i +(P_j{H}_i)^\flat\otimes(P_j{H}_i)^\flat \\
 && -\,\frac12\Upsilon_{P_jh_i,P_jh_i}\big)
 =\big({\rm S}_{\,{\mD}_1,\ldots,\mD_k} -\Div(H_j +\sum\nolimits_{\,i\ne j} {H}_i^\bot) +\lambda_j\big)\,g_j , \ \ j=1,\ldots,k.
\end{eqnarray*}
\end{example}

\begin{definition}\label{D-Ric-Dk}\rm
The Ricci type symmetric $(0,2)$-tensor $\Ric_{\,\mD}$ in \eqref{E-gravity} is defined by its restrictions
$\Ric_{\,\mD\,|\,\mD_j\times\mD_j}$ on $k$ subbundles $\mD_j\times\mD_j$ of $TM\times TM$,
\begin{eqnarray}\label{E-main-0ij-k}
\nonumber
 &&\hskip-4mm \Ric_{\,\mD\,|\,\mD_j\times\mD_j} = r_{j} -\<h_j,\,H_j\> +{\cal A}_j^\flat -{\cal T}_j^\flat +\Psi_j^\bot
 -{\rm Def}_{\mD_j^\bot}\,H_j^\bot +{\cal K}_j^\flat +({H}_j^\bot)^\flat\otimes({H}_j^\bot)^\flat \\
 \nonumber
 &&\hskip-5mm -\,\frac12\Upsilon_{h_j^\bot,h_j^\bot}
 -\frac12\Upsilon_{T_j^\bot,T_j^\bot}
 +\sum\nolimits_{\,i\ne j} \big(r^\bot_{i}|_{\mD_j}-\<h_i^\bot|_{\mD_j},\,H_i^\bot\>+(P_j{\cal A}^\bot_i)^\flat
  -(P_j{\cal T}^\bot_i)^\flat +\Psi_i|_{\mD_j} \\
 &&\hskip-5mm -\,{\rm Def}_{\mD_j}\,H_i
 +(P_j{\cal K}_i^\bot)^\flat +(P_j{H}_i)^\flat\otimes(P_j{H}_i)^\flat -\frac12\,\Upsilon_{P_jh_i,P_jh_i}
  -\frac12\,\Upsilon_{P_jT_i,P_jT_i} \big) +\mu_j \,g_j .
\end{eqnarray}
In other words,
\[
 \Ric_{\,\mD\,|\,\mD_j\times\mD_j} = U_j + \mu_j\,g_j,
\]
where $U_j$ is the LHS of \eqref{ElmixDDvp}
and $(\mu_j)$ are uniquely determined so (see Theorem~\ref{P-Ric-D} below) that critical metrics satisfy Einstein type equation \eqref{E-gravity}.
\end{definition}

\begin{theorem}\label{P-Ric-D}
A metric $g\in{\rm Riem}(M,\mD_1,\ldots\mD_k)$ is critical for the geometrical part of \eqref{Eq-Smix}
(i.e., $\Lambda=0=\Theta$) with respect to adapted variations if and only if
$g$ satisfies Einstein type equation \eqref{E-gravity}, where the tensor
$\Ric_{\,\mD}$ is given in Definition~\ref{D-Ric-Dk} with some (uniquely defined) $\mu_i\in\RR$.
\end{theorem}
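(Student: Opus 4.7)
The plan is to reduce Theorem~\ref{P-Ric-D} to the Euler-Lagrange characterization \eqref{ElmixDD} of the preceding theorem by a blockwise algebraic rewriting. First I would observe that both the adapted metric $g$ and the tensor $\Ric_{\,\mD}$ of Definition~\ref{D-Ric-Dk} are block-diagonal with respect to the orthogonal decomposition $TM=\bigoplus_{j=1}^{k}\mD_{j}$: the metric because the $\mD_{j}$ are pairwise $g$-orthogonal, and $\Ric_{\,\mD}$ because Definition~\ref{D-Ric-Dk} prescribes it only via its restrictions $\Ric_{\,\mD\,|\,\mD_{j}\times\mD_{j}}=U_{j}+\mu_{j}g_{j}$, with the off-diagonal pieces taken to vanish. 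Hence the Einstein type equation $\Ric_{\,\mD}-\tfrac12{\cal S}_{\,\mD}\,g=0$ (corresponding to $\Lambda=0=\Theta$) is automatic on off-diagonal blocks and reduces on each block $\mD_{j}\times\mD_{j}$ to the scalar-multiple condition $U_{j}=\bigl(\tfrac12{\cal S}_{\,\mD}-\mu_{j}\bigr)g_{j}$.

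Next I would invoke the preceding theorem, which asserts that $g$ is critical for the geometrical part of \eqref{Eq-Smix} under adapted (not volume-preserving) variations if and only if \eqref{ElmixDD} holds for each $j$, i.e., $U_{j}=c_{j}g_{j}$ with $c_{j}:={\rm S}_{\,\mD_{1},\ldots,\mD_{k}}-\Div\bigl(H_{j}+\sum_{i\neq j}H_{i}^{\bot}\bigr)$. Matching the two scalar factors then forces the identification $\mu_{j}=\tfrac12{\cal S}_{\,\mD}-c_{j}$, which is precisely the rule by which the scalars $\mu_{j}$ in \eqref{E-main-0ij-k} must be chosen. With this choice the blockwise Einstein equation and the critical equation coincide term by term, giving both directions of the equivalence.

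What remains is to confirm that $\mu_{j}$ and ${\cal S}_{\,\mD}$ are unambiguously determined. Taking the total $g$-trace of \eqref{E-main-0ij-k} produces ${\cal S}_{\,\mD}=\sum_{j}(\tr_{g_{j}}U_{j}+n_{j}\mu_{j})$; substituting $\mu_{j}=\tfrac12{\cal S}_{\,\mD}-c_{j}$ yields the single scalar identity $\bigl(1-n/2\bigr){\cal S}_{\,\mD}=\sum_{j}(\tr_{g_{j}}U_{j}-n_{j}c_{j})$, which uniquely fixes ${\cal S}_{\,\mD}$, and hence every $\mu_{j}$, as a function on $M$ whenever $n\neq 2$. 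The only delicate point is precisely this self-referential determination of ${\cal S}_{\,\mD}$ via its own trace, requiring the mild nondegeneracy $n\neq 2$; the rest of the argument is algebraic bookkeeping on top of the previous Euler-Lagrange result and the block structure of adapted tensors.
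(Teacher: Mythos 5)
Your argument is correct and follows essentially the same route as the paper: reduce to the blockwise Euler--Lagrange equations $U_j=c_j\,g_j$ via the preceding theorem, note that the Einstein type equation is automatic off the diagonal blocks and reads $U_j=(\tfrac12{\cal S}_{\,\mD}-\mu_j)\,g_j$ on $\mD_j\times\mD_j$, and determine the $\mu_j$ from matching the scalar factors. The only cosmetic difference is that you solve the resulting conditions by tracing and eliminating ${\cal S}_{\,\mD}$ directly, whereas the paper writes them as the $k\times k$ linear system $\sum_i n_i\mu_i-2\mu_j=a_j$ and checks that its determinant $2^{k-1}(2-n)$ is nonzero; both hinge on the same nondegeneracy $n\neq 2$ (the case $n=2$ being handled separately with $\mu_1=\mu_2=0$).
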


\begin{proof} The Euler-Lagrange equations \eqref{ElmixDDvp} consist of ${\mD}_j\times\mD_j$-components.
Thus, for the geometrical part of \eqref{Eq-Smix} we obtain \eqref{E-main-0ij-k}.
If $n=2$ (and $k=2$), then we take $\mu_1=\mu_2=0$, see \cite{r2018}.
Assume that $n>2$. Substituting \eqref{E-main-0ij-k} with arbitrary $(\mu_1,\ldots,\mu_k)$ into \eqref{E-gravity} along $\mD_j$,
we conclude that if the Euler Lagrange equations $U_j=b_j\,g_j\ (1\le j\le k)$ hold, then
\[
 \Ric_{\,{\mD}}-(1/2)\,{\cal S}_{\,\mD}\cdot g = 0,
\]
see \eqref{E-gravity} with $\Lambda=0=\Theta$, if and only if $(\mu_j)$ satisfy the following
linear~system:
\begin{equation}\label{E-mu-system}
  \sum\nolimits_{\,i} n_i\,\mu_i -2\,\mu_j = a_j,\quad j=1,\ldots,k,
\end{equation}
with coefficients $a_j=2\,b_j -\tr\,\sum_i U_i$.
The matrix of \eqref{E-mu-system} is
\[
 A=
 \left(
   \begin{array}{ccccc}
     n_1-2 & n_2 & \ldots& n_{k-1} & n_k \\
     n_1 & n_2-2 & \ldots& n_{k-1} & n_k \\
     \ldots & \ldots & \ldots& \ldots & \ldots \\
     n_1 & n_2 & \ldots & n_{k-1} & n_k-2 \\
   \end{array}
 \right) .
\]
Its determinant: $\det A=2^{k-1}(2-n)$ is negative when $n>2$. Hence, the system \eqref{E-mu-system} has a unique solution
$(\mu_1,\ldots,\mu_k)$.
It is given by $ \mu_{i}=-\frac1{2n-4}\,\big(\sum\nolimits_{\,j}\,(a_{i}-a_{j})\,n_{j}-2\,a_{i}\big)$.
\end{proof}

\begin{example}[see \cite{r2018}]\label{D-Ric-D}\rm
The symmetric Ricci type tensor $\Ric_{\,\mD}$ in \eqref{E-gravity} with $k=2$, is defined by its restrictions on two
subbundles of $TM\times TM$,
\begin{eqnarray}\label{E-main-0ij}
\nonumber
 && \Ric_{\,\mD\,|\,\mD^\bot\times\mD^\bot} = {r}
 -\<h^\bot,\,H^\bot\>+({\cal A}^\bot)^{\,\flat}-({\cal T}^\bot)^{\,\flat}+\Psi-{\rm Def}_{\mD}\,H+({\cal K}^\bot)^{\,\flat} \\
\nonumber
 && \hskip10mm +\,H^\flat\otimes H^\flat -\frac{1}{2}\,\Upsilon_{\,h,h} -\frac12\,\Upsilon_{\,T,T}+\mu_1\,g^\perp, \\
\nonumber
 &&  \Ric_{\,\mD|\,\mD\times\mD} = {r}^\bot-\<h,\,H\>+{\cal A}^\flat-{\cal T}^\flat
  +\Psi^\bot -{\rm Def}_{{\mD}^\bot}\,H^\bot +{\cal K}^\flat \\
 &&  \hskip10mm +\,(H^\bot)^\flat\otimes (H^\bot)^\flat -\frac{1}{2}\,\Upsilon_{\,h^\bot, h^\bot} -\frac12\,\Upsilon_{\,T^\bot, T^\bot} +\mu_2\,g^\top ,
\end{eqnarray}
where $\mu_1=-\frac{n_1-1}{n-2}\,\Div(H^\bot-{H})$ and $\mu_2=\frac{n_2-1}{n-2}\,\Div(H^\bot-{H})$.
Here \eqref{E-main-0ij}$_2$ is dual to \eqref{E-main-0ij}$_1$
with respect to interchanging distributions ${\mD}$ and ${\mD}^\bot$,
and their last terms vanish if $n_1=n_2=1$.
Also, $\,{\cal S}_{\,\mD} := \tr_g\Ric_{\,\mD} = {\rm S}_{\,\mD,\mD^\bot} + \frac{n_2-n_1}{n-2}\,\Div(H^\bot-{H})$.
\end{example}


\end{document}